\documentclass{article}

\usepackage[a4paper]{geometry}
\usepackage{hyperref}

\usepackage[useprefix,backref,sorting=none,sortcites=true]{biblatex}
\usepackage{mathtools}
\usepackage{amsfonts,amssymb}
\usepackage{amsthm}
\usepackage{accents}
\usepackage{setspace}
\theoremstyle{plain}
\newtheorem{thm}{Theorem}
\newtheorem{prop}{Proposition}
\newtheorem{lem}{Lemma}
\newtheorem*{conj}{Conjecture}
\newtheorem*{cor}{Corollary}
\theoremstyle{definition}
\newtheorem{defn}{Definition}
\newtheorem{exmp}{Example}
\theoremstyle{remark}
\newtheorem*{rem}{Remark}
\newtheorem*{note}{Note}

\newcommand{\U}{\mathrm{U}}
\newcommand{\ilc}{\Tilde{\epsilon}}
\newcommand{\lc}{\underaccent{\tilde}{\epsilon}}
\newcommand{\qh}{\Tilde{\star}}
\newcommand{\qhv}{{\underaccent{\tilde}{\star}}}
\newcommand{\Spin}{\mathrm{Spin}}
\newcommand{\vol}{\mathrm{vol}}
\renewcommand{\Re}{\mathop\mathrm{Re}}

\newcommand{\Gtwo}{\mathrm{G}_2}
\newcommand{\SL}{\mathrm{SL}}
\newcommand{\GL}{\mathrm{GL}}
\newcommand{\SO}{\mathrm{SO}}
\newcommand{\SU}{\mathrm{SU}}
\newcommand{\Sp}{\mathrm{Sp}}
\newcommand{\R}{\mathbb{R}}
\newcommand{\hook}{\mathbin\lrcorner}
\DeclareMathOperator{\Hom}{Hom}

\title{A note on 4-forms in 8-dimensions}
\author{Sam Close\thanks{sam.close@nottingham.ac.uk}\\School of Mathematical Sciences, University of Nottingham}

\begin{document}

\maketitle

\begin{abstract}
We present a note on two related questions on 4-forms in 8-dimensions. In particular, we clarify that the Cayley form is not unique in determining a metric via the formula of Karigiannis. Additionally, we discuss and prove part of a conjecture of Salamon and Walpuski.
\end{abstract}

\section{Introduction}
4-forms in 8-dimensions are interesting algebraically and geometrically. From the former point of view, they are the first\footnote{That is, considering the space of \(p\)-forms in \(n\)-dimensions, increasing \(n\), and checking all \(0\leq p\leq n\).} set of forms for which there are infinitely many \(\GL\) orbits \autocite{Ryvkin}. This makes their characterisation highly non-trivial (unlike, for example, 2-forms in any dimension). Work on classifying orbits was first done by Antonyan in the complex case (translated into English in \autocite{Oeding2022}), and recently made real by \textcite{givenEmanuele2025} (a method was proposed to classify these orbits by \textcite{Le2011} previously). 

Most of the interest in this area comes from the Cayley form\footnote{In the literature one may also see this called the \(\Spin(7)\) form, Bonan form, or fundamental 4-form, once \(\Spin(7)\) is established as the group of interest.}: the 4-form stabilised by \(\Spin(7)\), discovered by \textcite{Bonan1966+}. The Cayley form has been extremely well studied at this point, and it is very well understood. It determines an orientation and a metric, albeit via a complicated formula \autocite{Karigiannis2005}. There are also further classifications of the Cayley form relying on comass \autocite{Katz2010,Bangert2009}. For the purposes of this note, we define a Cayley form in the following way.
\begin{defn}
    Let \(V\) be an 8-dimensional real vector space. A 4-form \(\Phi\in\Lambda^4 V^*\) is called a Cayley form if its \(\GL(V)\) stabiliser is \(\Spin(7)\).
\end{defn}

In this short note, we highlight that the geometry of 4-forms in 8-dimensions is richer than appreciated. In particular, we highlight that \emph{the Cayley form is not unique in defining a non-degenerate Riemannian metric} by the formula of Karigiannis, contrary to folklore. This is in contrast to \(\Gtwo\) where there are only two orbits that define a non-degenerate metric (only one Riemannian). We also discuss a conjecture of Salamon and Walpuski relating to their notion of non-degenerate 4-forms.

\paragraph{Notation} Throughout \(V\) will denote a real 8-dimensional vector space. We will use \(\langle u,v\rangle\) to be the Euclidean inner product of \(u,v\) and \(\|u\|^2 := \langle u,u\rangle\).

\subparagraph{Quasi-Hodge Isomorphism}
We must introduce the quasi-Hodge isomorphism\footnote{This name comes from \autocite{Vaz2016}, but \autocite{Hehl2003} call it a pre-metric duality operator.} briefly. Since we have no metric \textit{a priori}, the only way we can `raise' indices is with the Levi-Civita tensor densities \(\ilc\) and \(\lc\). These allow us to define an isomorphism between \(p\)-vectors (resp.\ \(p\)-forms) and \((n-p)\)-forms (resp.\ \((n-p)\)-vectors) of weight \(-1\) (resp.\ \(+1\)). The definition is much the same as the Hodge dual.
\begin{defn}\label{defn:qh}
    Let \(\alpha\in \Lambda^p W^*\) and \(A\in \Lambda^p W\) where \(W\) is an \(n\)-dimensional real vector space. The quasi-Hodge isomorphism is given as
    \begin{equation}
        (\qh\alpha)^{a_1a_2\cdots a_{n-p}} = \frac{1}{p!}\ilc^{b_1b_2\cdots b_p a_1a_2\cdots a_{n-p}}\alpha_{b_1b_2\cdots b_p}
    \end{equation}
    and 
    \begin{equation}
        (\qhv A)_{a_1a_2\cdots a_{n-p}} = \frac{1}{p!}\lc_{b_1b_2\cdots b_p a_1a_2\cdots a_{n-p}}A^{b_1b_2\cdots b_p}.
    \end{equation}
\end{defn}
See \autocite{Vaz2016} for an invariant description (i.e., without indices), and \autocite{Schouten1989,Schouten1954} for a more detailed index-theoretic description of the Levi-Civita tensor densities.

\section{On Metric 4-forms}
The Cayley form is well-known to determine a metric. There are at least two intuitive ways of understanding this. The first is simply by noting that the Cayley form is stabilised by \(\Spin(7)\), a subgroup of \(\SO(8)\). The other point of view is spinorial: the space of Cayley forms is 43-dimensional, which can be seen as a metric (36) and a unit spinor (7). Both of these viewpoints are expounded on in \autocite{Krasnov2024}.

The explicit form of the metric determined by a Cayley form is found via a formula of \textcite{Karigiannis2005}, which we recall here formally.
\begin{defn}
    Let \(\{e_i\}_{i=1,2,\ldots,8}\) be a basis of \(V\). Let 
    \begin{equation}
        A:V\times \Lambda^4 V^*\to \R; \quad (v,\alpha)\mapsto A_\alpha(v) = (v\hook\alpha\wedge\alpha)(e_1,e_2,\ldots,e_7)
    \end{equation}
    and 
    \begin{equation}
    \begin{split}
        B:V\times \Lambda^4 V^* &\to S^2\R^7\\
        (v,\alpha)&\mapsto (B_\alpha(v))_{\hat\imath\hat\jmath} = (e_{\hat\imath}\hook v\hook \alpha\wedge e_{\hat\jmath}\hook v\hook\alpha\wedge v\hook \alpha)(e_1,e_2,\ldots, e_7)
    \end{split}
    \end{equation}
    where \(\hat{\imath},\hat{\jmath}=1,2,\ldots,7\) and w.l.o.g.\ \(v^8\neq0\). Then define
    \begin{equation}
        q:V\times \Lambda^4 V^*\to \R;\quad (v,\alpha)\mapsto \left|\frac{(\det(B_\alpha(v)))^{1/3}}{(A_\alpha(v))^3}\right|.\label{eq:q}
    \end{equation}
\end{defn}
\begin{rem}
Our definition, compared to \autocite{Karigiannis2005}, includes an absolute value to account for the possible discrepancy in orientation between \(u\hook v\hook \alpha\wedge u\hook v\hook \alpha\wedge\alpha \) and \(\alpha\wedge\alpha\).
\end{rem}

The following two results are proven by \textcite{Karigiannis2005}.
\begin{lem}
    \(q\) is absolute homogeneous of order 1 in \(\Lambda^4 V^*\) and homogeneous of order 4 in \(V\).
\end{lem}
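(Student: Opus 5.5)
We will verify both homogeneity claims by direct scaling, since $A_\alpha(v)$ and $B_\alpha(v)$ are built multilinearly from $v$ and $\alpha$.

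\textbf{Homogeneity in $\alpha$.} Replace $\alpha$ by $t\alpha$ with $t\in\R\setminus\{0\}$.
\begin{itemize}
\item The form $v\hook\alpha\wedge\alpha$ is quadratic in $\alpha$, so $A_{t\alpha}(v)=t^2A_\alpha(v)$.
\item Each entry of $B_\alpha(v)$ is cubic in $\alpha$, since it is a wedge of three factors each linear in $\alpha$. Hence $B_{t\alpha}(v)=t^3B_\alpha(v)$.
\item As $B$ is a $7\times 7$ matrix, $\det B_{t\alpha}(v)=t^{21}\det B_\alpha(v)$, and therefore $(\det B_{t\alpha}(v))^{1/3}=t^{7}(\det B_\alpha(v))^{1/3}$. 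We take the real cube root, which is odd.
\end{itemize}
Dividing by $A_{t\alpha}(v)^3=t^6A_\alpha(v)^3$ gives a factor $t^7/t^6=t$. The absolute value in \eqref{eq:q} then yields $q(v,t\alpha)=|t|\,q(v,\alpha)$, which is absolute homogeneity of order $1$.

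\textbf{Homogeneity in $v$.} Replace $v$ by $sv$.
\begin{itemize}
\item $A$ is linear in $v$, so $A_\alpha(sv)=sA_\alpha(v)$ and $A^3$ scales by $s^3$.
\item $B$ is cubic in $v$, so $B_\alpha(sv)=s^3B_\alpha(v)$. The determinant then scales by $s^{21}$ and its cube root by $s^7$.
\end{itemize}
Thus $q(sv,\alpha)=|s^7/s^3|\,q(v,\alpha)=|s|^4q(v,\alpha)$. This is homogeneity of order $4$; because $4$ is even, the absolute value makes no difference up to sign conventions.

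\textbf{Basis independence.} The subtle point is that $B$ is defined using a chosen basis and the assumption $v^8\neq 0$, so we must check that scaling $v$ does not interfere with that choice. Scaling preserves $v^8\neq 0$ for $s\neq0$, so the same basis can be used. For $s=0$ the expression is degenerate, and the claim concerns $s\neq 0$.

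One could also ask whether $q$ depends on the choice of basis at all. Karigiannis shows that changing basis multiplies numerator and denominator by matching powers of the determinant, and this dependence cancels in the ratio. The scaling argument above does not need that fact, since it works in a fixed basis. We therefore expect no real obstacle here: the only care needed is the parity of exponents together with the absolute value.
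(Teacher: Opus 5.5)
Your proposal is correct and follows the paper's own direct-scaling argument. In both, $A$ is quadratic in $\alpha$ and linear in $v$, while $B$ is cubic in each, so $\det B$ gains a factor $t^{21}$ (resp.\ $s^{21}$) and its cube root $t^{7}$ (resp.\ $s^{7}$); dividing by $A^3$ and taking the absolute value leaves $|t|$ and $s^4$. Your remarks on the real cube root, the $v^8\neq 0$ convention and basis independence are harmless extra detail that the paper leaves implicit.
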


\begin{proof}
In the forthcoming, let \(\lambda\in\R\). Sending \(\alpha\mapsto \lambda\alpha\), \(A_\alpha\mapsto \lambda^2 A_\alpha\). Meanwhile, \(B_\alpha\to \lambda^3 B_\alpha\), thus \(\det(B_\alpha) \mapsto \lambda^{21}\det(B_\alpha)\). Thus \(q_\alpha \mapsto |\lambda|q_\alpha\). This proceeds, \textit{mutatis mutandis} for \(V\), resulting in \(v\mapsto\lambda v\) implying \(q(v)\mapsto \lambda^4 q(v)\).
\end{proof}

\begin{thm}
Let \(\Phi\) be a Cayley form on \(V\). Then \(q_\Phi(v)\propto \|v\|^4\).
\end{thm}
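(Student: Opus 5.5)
The plan is to reduce the statement to an invariance argument, using $\Spin(7)\subset\SO(8)$ and the transitivity of $\Spin(7)$ on the unit sphere $S^7$. First we make $q_\Phi$ basis-independent. Under a change of basis of $V$, $A_\Phi(v)$ is rescaled by the determinant of the change of basis restricted to $e_1,\dots,e_7$. The matrix $B_\Phi(v)$ transforms as a symmetric bilinear form twisted by a density factor. Tracking these factors, $\det(B)^{1/3}$ picks up the square of the $7\times7$ Jacobian times a density power, so the ratio in \eqref{eq:q} changes only by a factor that is constant in $v$. (Concretely, one can rewrite $q$ as a density built from $\qh$ of the 7-forms involved. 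The precise weight is exactly what makes the ratio a well-defined function up to a single overall constant.) Consequently, for any $g\in\GL(V)$ we get $q_{g^*\Phi}(v)=c(g)\,q_\Phi(gv)$, with $c(g)$ depending only on $g$. In particular, for $g$ in the stabiliser $\Spin(7)$ of $\Phi$ we obtain
\[
q_\Phi(gv)=c(g)^{-1}q_\Phi(v).
\]

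Second, we show $c(g)=1$ on $\Spin(7)$. The map $g\mapsto c(g)$ is a continuous homomorphism $\Spin(7)\to\R_{>0}$. Since $\Spin(7)$ is compact and connected (indeed semisimple), such a homomorphism is trivial. Hence $q_\Phi$ is $\Spin(7)$-invariant.

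Third, we combine invariance with homogeneity. $\Spin(7)\subset\SO(8)$ acts transitively on the unit sphere $S^7\subset V$ with respect to the metric it preserves. So $q_\Phi$ is constant, say $\kappa$, on $S^7$. The earlier Lemma says $q$ is homogeneous of order 4 in $v$, which gives $q_\Phi(v)=\kappa\|v\|^4$ for all $v\neq0$ (and trivially at $v=0$, where both sides vanish). Transitivity is standard, for instance via $S^7=\Spin(7)/\Gtwo$; it can be cited or verified using octonions.

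The main obstacle is the first step, because the definition fixes a basis and imposes the convention $v^8\neq0$. This needs care: we must check that the formula is genuinely independent of these choices up to an overall constant, and that the absolute value absorbs orientation signs. I would handle it by writing $A$ and $B$ invariantly via $\qh$, so that each becomes a tensor density of known weight; the weights then cancel in $q$ up to $|\det g|$ powers. One should also confirm that $\kappa\neq0$, i.e.\ that $A_\Phi(v)\neq0$ and $\det B_\Phi(v)\neq0$ for $v\neq0$. Evaluating once at $v=e_8$ in the standard form of $\Phi$ suffices, but this is not needed for the proportionality itself.
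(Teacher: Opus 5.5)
Your proof is correct, but it takes a genuinely different route from the paper. The paper defers to Karigiannis, who uses the algebraic identities satisfied by $\Phi$ to compute $A_\Phi(v)$ and $B_\Phi(v)$ explicitly in terms of the metric and volume form, and then evaluates the determinant directly. That computation produces the explicit nonzero constant, and with it a concrete recipe for $g_\Phi$, which is the real content of his result.

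You replace all of this with two structural facts: naturality of $q$, and transitivity of the stabiliser on the unit sphere. Your route needs no identities for $\Phi$. The cost is that it only gives proportionality: the value of the constant, and the fact that it is nonzero, still require one evaluation, e.g.\ at $v=e_8$.

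Your Step 1 can be made sharper, and Step 2 then becomes unnecessary. $A_\alpha(v)$ and $B_\alpha(v)$ depend only on the classes of $e_1,\dots,e_7$ in $V/\mathrm{span}\{v\}$, because $v\hook(v\hook\alpha\wedge\alpha)=0$ and $v$ annihilates the 7-forms defining $B$. If these classes are changed by a matrix $P$, then $A\mapsto\det(P)\,A$ and $\det B\mapsto\det(P)^9\det B$. So the weights cancel \emph{exactly}: $q$ is independent of the basis and of the convention $v^8\neq0$, and $q_{g^*\alpha}(v)=q_\alpha(gv)$ for every $g\in\GL(V)$. Thus $c\equiv1$, and the homomorphism argument is not needed. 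As you wrote it, ``a factor constant in $v$'' is slightly loose, since a basis adapted to $v$ varies with $v$; the exact cancellation is what makes the step rigorous.

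What your route buys is generality. The same argument gives $q_\alpha\propto\|v\|^4$ for any 4-form whose stabiliser contains a group transitive on $S^7$, such as $\SU(4)$ or $\Sp(2)$. This accounts for the proportionality in the paper's forms $\beta_t$ and $\eta$ without computer algebra. The caveat is that the constant may vanish. At $t=0$, the stabiliser of $\tfrac12\omega\wedge\omega$ contains the symplectic group of $\omega$, which is transitive on $V\setminus\{0\}$; degree-4 homogeneity then forces $q\equiv0$.
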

\begin{proof}
     This follows from the algebra a Cayley form satisfies: see \autocite{Karigiannis2005}.
\end{proof}
\textit{Prima facie}, it is not obvious why \(q_\Phi(v)\) is proportional to the norm. Equally, there is no reason \textit{a priori} why this should only work for the Cayley form: there appears to be nothing required of the 4-form in this expression, other than it must determine a top-form.

Since the norm is positive definite in the case of a Cayley form, we can meaningfully extract the metric by taking the root and polarising
\begin{equation}
    g_\Phi(u,v) = \frac{1}{4}\left(\sqrt{q_\Phi(u+v)}-\sqrt{q_\Phi(u-v)}\right).
\end{equation}
One can use the same formula for \(q\) on the split Cayley form \(\Phi^\prime\) (stabilised by \(\Spin(4,3)\)) to obtain the square of an isotropic quadratic form. A metric cannot be extracted from \(q_{\Phi^\prime}\) in the same way as \(q_\Phi\) due to a choice of branch when taking the root; however it is still accepted that this 4-form defines a split signature metric. This leads on to the following definition.

\begin{defn}\label{def:Metric4Form}
    A 4-form \(\alpha\) is \emph{metric} if \(q_\alpha\) is proportional to the square of a non-degenerate quadratic form.
\end{defn}
In this sense, both Cayley forms (\(\Phi\) stabilised by \(\Spin(7)\) and \(\Phi^\prime\) stabilised by \(\Spin(4,3)\)) are metric 4-forms. What does not seem appreciated in the literature is the following, and is the key statement of this note.

\begin{prop}\label{thm:CayleyNotUnique}
The Cayley form is not unique in defining a non-degenerate Riemannian metric via \eqref{eq:q}.
\end{prop}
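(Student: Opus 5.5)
We prove the proposition by exhibiting an explicit 4-form \(\alpha\) that is not in the \(\GL(V)\)-orbit of a Cayley form, yet for which \(q_\alpha(v)\propto\|v\|^4\) for the Euclidean norm. The natural candidates are 4-forms whose stabiliser is a proper subgroup of \(\SO(8)\) acting irreducibly, or at least transitively on the unit sphere \(S^7\), such as \(\SU(4)\), \(\Sp(2)\Sp(1)\) or \(\U(4)\). Transitivity gives the key simplification. Since \(q\) is built \(\GL(V)\)-equivariantly from \(\alpha\) (up to the determinant weight, which is trivial for \(\SO(8)\)), \(q_\alpha\) is invariant under the stabiliser of \(\alpha\). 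If that stabiliser acts transitively on \(S^7\), then by homogeneity of order 4 in \(V\) (the Lemma) we get \(q_\alpha(v)=c\|v\|^4\) with \(c=q_\alpha(e_8)\). It then only remains to check that \(c\neq 0\) and that \(\alpha\) is not Cayley.

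My concrete choice is the quaternion-Kähler 4-form \(\Omega=\omega_I^2+\omega_J^2+\omega_K^2\) on \(\R^8=\mathbb{H}^2\), whose stabiliser is \(\Sp(2)\Sp(1)\). This group is transitive on \(S^7\). As a fallback I would use a one-parameter family like \(\alpha_t=\tfrac12\omega^2+t\,\Re(\Omega_{\mathbb C})\) on \(\mathbb{C}^4\), with stabiliser containing \(\SU(4)\), which is also transitive on \(S^7\). Here \(t\) is chosen away from the value that gives the Cayley form \(-\tfrac12\omega^2+\Re\Omega_{\mathbb C}\).

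\textbf{Not Cayley.} This is immediate from dimension counts. The stabiliser \(\Sp(2)\Sp(1)\) has dimension 13, while \(\Spin(7)\) has dimension 21. For the \(\SU(4)\) family, the stabiliser is of dimension 15 (not 21) for generic \(t\). So neither form lies in the \(\GL(V)\)-orbit of a Cayley form.

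\textbf{Non-vanishing of \(c\).} This is the main obstacle: a single explicit evaluation at \(v=e_8\). The steps are:
\begin{itemize}
\item Compute \(A_\alpha(e_8)=(e_8\hook\alpha\wedge\alpha)(e_1,\dots,e_7)\), which amounts to computing \(\alpha\wedge\alpha\) as a multiple of the volume form. For \(\Omega\) this is a nonzero multiple (\(\Omega^2\propto\vol\)).
\item Compute the \(7\times7\) matrix \(B_\alpha(e_8)\). By invariance under the stabiliser of \(e_8\) (\(\Sp(1)\Sp(1)\) acting on \(\R^7=\R^3\oplus\R^4\)), it is block diagonal with two scalar blocks. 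So only two numbers are needed, e.g.\ the entries \(B_{11}\) and \(B_{44}\).
\item Check that both entries are nonzero, which I would do by direct expansion of the wedge products in a standard basis.
\end{itemize}

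If \(B\) turns out to be degenerate for \(\Omega\), the same block-diagonal reasoning applies to the \(\SU(4)\) family: there \(e_8\)'s stabiliser \(\SU(3)\) splits \(\R^7=\R\oplus\mathbb{C}^3\). Then \(\det B\) is an explicit polynomial in \(t\), and one picks \(t\) where it is nonzero and \(\alpha_t\) is not Cayley. Either way, \(q_\alpha=c\|v\|^4\) with \(c>0\) exhibits a non-Cayley 4-form defining the Euclidean (Riemannian, non-degenerate) metric through the formula of Karigiannis.
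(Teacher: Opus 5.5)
Your argument is correct, but it follows a different route from the paper's. The paper takes the family \(\beta_t=\frac12\omega\wedge\omega+t\Re(\Omega)\) (your fallback) and obtains \(q_{\beta_t}(v)=6^{7/3}t^4(3+4t^2)^{-3}\|v\|^4\) for arbitrary \(v\) by a computer-algebra calculation. You instead combine invariance of \(q_\alpha\) under the stabiliser, transitivity on \(S^7\), and Schur's lemma for the isotropy group, which reduces everything to two numbers at one point; your main example is the Kraines form (the paper's \autoref{exmp:sd} with \(a_1=a_2=a_3=1\)).

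The invariance step needs a line of justification, since \eqref{eq:q} is written in a fixed basis and only for \(v^8\neq0\). It does hold. Both \(v\hook\alpha\wedge\alpha\) and \(u\hook v\hook\alpha\wedge w\hook v\hook\alpha\wedge v\hook\alpha\) are multiples of \(v\hook\vol\), and the resulting powers of \(v^8\) and of the normalisation of \(\vol\) cancel. Hence \(q_{h^*\alpha}(v)=q_\alpha(hv)\) for all \(h\in\GL(V)\).

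The evaluation you defer also goes through, because \(B_\alpha(e_8)\) is the bilinear form \eqref{eq:G2metric} of the 3-form \(e_8\hook\alpha\) restricted to \(\R^7\).
For the Kraines form this 3-form is \(a\,e^{123}+b\sum_i e^i\wedge\omega_i\) with \(ab\neq0\). Here \(e^1,e^2,e^3\) are dual to \(Ie_8,Je_8,Ke_8\), and \(\omega_i\) is the triple restricted to the quaternionic line orthogonal to \(e_8\). So both blocks are non-zero, and the self-wedge is non-zero as well. The two blocks have opposite signs, i.e.\ the 3-form is of split type, but this is harmless because of the absolute value in \(q\).
For \(\beta_t\) the 3-form is a rescaling of a \(\Gtwo\) form for every \(t\neq0\). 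This gives blocks proportional to \(1\) and \(t^2\), which reproduces the paper's constant.

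Your claim that the stabiliser of \(\beta_t\) is 15-dimensional for generic \(t\) is only asserted; the paper does not prove the analogous statement either. One way to close it: the ratio of \(\alpha\wedge\alpha\) to the volume form of the metric extracted from \(q_\alpha\) is, in absolute value, a \(\GL(V)\)-invariant. On \(\beta_t\) it is proportional to \((3+4t^2)^7/t^8\), which attains the Cayley value only at \(t^2=1\).

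Your route gives a hand-checkable proof. It also explains why \(q_\alpha\propto\|v\|^4\), a point the paper calls not obvious \emph{a priori}, and it applies verbatim to any 4-form whose stabiliser is transitive on \(S^7\). The paper's direct computation is less illuminating, but it delivers the explicit \(t\)-dependence in one stroke, which the paper reuses at the end when contrasting metric and strongly non-degenerate forms.
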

In the proof, we discuss arguably the simplest other example. Note that the calculations involving \(q\) were obtained by Mathematica \autocite{Mathematica}.
\begin{proof}
    Suppose we have an \(\SU(4)\) structure, i.e., a K\"ahler 2-form
    \begin{equation}
        \omega = e^{12} + e^{34}+ e^{56}+e^{78}
    \end{equation}
    and the holomorphic volume form
    \begin{equation}
        \Omega = (e^1 + ie^2)\wedge(e^3+ie^4)\wedge(e^5+ie^6)\wedge (e^7+ ie^8).
    \end{equation}
    It is well-known that the Cayley form can be expressed in terms of these forms as
    \begin{equation}
        \Phi = \frac{1}{2}\omega\wedge\omega + \Re(\Omega).
    \end{equation}
    Instead, we can consider a 1-parameter family of 4-forms
    \begin{equation}
        \beta_t = \frac{1}{2}\omega\wedge\omega + t\Re(\Omega).\label{eq:alphat}
    \end{equation}
    \(\beta_t\wedge\beta_t=2(3+4t^2)e^1\wedge e^2\wedge\cdots \wedge e^8\) from which follows
    \begin{equation}
        A_{\beta_t}(v) = -(3+4t^2)v^8.
    \end{equation}
    Calculating \(B_{\beta_t}\) is significantly more difficult, but one can show that
    \begin{equation}
        q_{\beta_t}(v) = 6^{7/3}\frac{t^4}{(3+4t^2)^3} \|v\|^4.\label{eq:gbeta}
    \end{equation}
    Thus there exists a 1-parameter family of 4-forms stabilised by \(\SU(4)\) which define a non-degenerate Riemannian metric, and which intersect the \(\Spin(7)\) orbit when \(t=1\). When \(t=0\), the stabiliser becomes \(\Sp(4,\R)\), and the form is no longer metric.
\end{proof}

The example given is sufficient for the proof, but it is interesting to provide another example.
\begin{exmp}[Self-Dual Subspaces]\label{exmp:sd}
   Let \(V=W_1\oplus W_2\), with \(E^{1,2,3,4}\) a basis for \(W_1^*\) and \(F^{1,2,3,4}\) a basis for \(W_2^*\). Notating \(E^{ij}=E^i\wedge E^j\) and \(F^{ij}=F^i\wedge F^j\), define the 2-forms
   \begin{equation}
   \begin{split}
    \sigma^1 &= E^{12}+ E^{34} + F^{12} + F^{34},\\
    \sigma^2 &= E^{13}+ E^{42}+F^{13}+ F^{42},\\
    \sigma^3 &=  E^{14}+ E^{23}+ F^{14}+ F^{23}.
   \end{split}
   \end{equation}
   Then define the 3-parameter family of 4-forms
   \begin{equation}
   \eta = \frac{1}{2}(a_1\sigma^1\wedge\sigma^1 + a_2\sigma^2\wedge\sigma^2+a_3\sigma^3\wedge\sigma^3).\label{eq:eta3param}
   \end{equation}
   \(\eta\) captures a number of interesting forms for particular values of the parameters. When \(a_1=a_2=a_3=1\), \(\eta\) is the Kraines form \autocite{Kraines1965,Kraines1966} (or the quaternion-K\"ahler form in the language of \autocite{givenEmanuele2025}; their 7th orbit), stabilised by \(\Sp(2)\Sp(1)\). When \((a_1,a_2,a_3)\) is a permutation or overall sign reversal of \((1,1,-1)\), \(\eta\) is the Cayley form \autocite{Krasnov2024,givenEmanuele2025}. The symmetry of the Kraines form can be broken. When \(a_1=a_2\neq a_3\) but not the Cayley case, the \(\Sp(1)\) of the stabiliser breaks into \(\U(1)\) (this is orbit 13 in \autocite{givenEmanuele2025}). When \(a_1\neq a_2\neq a_3\), the symmetry breaks to just \(\Sp(2)\), which corresponds to the hyper-K\"ahler 4-form (orbit 19 in \autocite{givenEmanuele2025}). The relationship between these forms is elaborated on by \textcite{Bryant1989a}. In general, we have
   \begin{equation}
   q_\eta(v) = \frac{6^{7/3}a_1^2a_2^2a_3^3|a_1+a_2+a_3|}{(3(a_1^2+a_2^2+a_3^2)+2(a_1a_2+a_2a_3+a_3a_1))^3}\|v\|^4.
   \end{equation}
\end{exmp}

In the proof of \autoref{thm:CayleyNotUnique}, we construct the 4-form \(\beta_t\) using the data of an \(\SU(4)\) structure: the K\"ahler form and the holomorphic volume form. These data already define a Riemannian metric, once one constructs the almost complex structure. Similarly, in \autoref{exmp:sd}, we construct the 4-form \(\eta\) using two triples of 2-forms. These define a (product) metric via an Urbantke-like formula:
\begin{equation}
g(u,v) \vol = \epsilon_{ijk} u\hook \sigma^i\wedge v\hook \sigma^j \wedge\sigma^k \wedge\eta    
\end{equation}
where \(\epsilon_{ijk}\) is the usual totally antisymmetric symbol in 3-dimensions. Perhaps then it is not surprising that the 4-forms define metrics, and nor necessarily should it be: they are stabilised by subgroups of \(\SO(8)\), which is one of the same justifications for why the Cayley form defines a metric. The point we want to highlight with this note is that the formula of Karigiannis is more general than appreciated, and that one can start from a 4-form abstract of the data used to construct it (i.e., just start with an orbit representative) to recover the same metric.

It remains unclear what the precise condition required is for a 4-form to be metric: intuitively it must be stabilised by a subgroup of the orthogonal group. In general, compactness should not be necessary, unless we are considering only Riemannian signature.  

\section{On the Non-Degeneracy of 4-forms}
In dimension seven, we have a notion of a non-degenerate 3-form: that is, a 3-form \(\varphi\) is non-degenerate iff it determines a non-degenerate metric via the formula
\begin{equation}
    g_\varphi(u,v)\vol_\varphi = u\hook\varphi\wedge v\hook\varphi\wedge\varphi.\label{eq:G2metric}
\end{equation}
It is well-known that there are only two non-degenerate orbits of 3-forms in 7-dimensions, corresponding to \(\Gtwo\) and \(\Tilde{\mathrm{G}}_2\), the split \(\Gtwo\) (see \autocite{Agricola2008} for an interesting historical discussion).

With 4-forms in 8-dimensions, the notion of non-degeneracy is not as clear. In this section, we focus on a conjecture coming from \textcite{Salamon2017}. To state the conjecture, we must first recall their definition of non-degeneracy.

\begin{defn}\label{def:SWnd}
   A 4-form \(\alpha\in\Lambda^4 V^*\) is called \emph{non-degenerate} iff for all linearly independent triples \(u,v,w\in V\), \(\exists\ x\in V\) such that \(\alpha(u,v,w,x)\neq0\). Otherwise, it is called \emph{degenerate}.
\end{defn}

There is also a notion of non-degeneracy in multisymplectic geometry \autocite{Ryvkin}: a \(p\)-form \(\alpha\) is called non-degenerate if \(\hook\alpha:V\to \Lambda^{p-1}V^*\) is injective. In 8-dimensions with 4-forms, the two notions are related, but not equivalent. One can show the following. 
\begin{lem}
    Non-degeneracy in the sense of \autoref{def:SWnd} implies multisymplectic non-degeneracy, i.e.,
    \begin{equation*}
        \text{non-degenerate in the sense of \autoref{def:SWnd}}\implies \text{non-degenerate in the multisymplectic sense}.
    \end{equation*}
\end{lem}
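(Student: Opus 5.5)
We argue by contraposition: if \(\alpha\) is degenerate in the multisymplectic sense, then it is degenerate in the sense of \autoref{def:SWnd}. Multisymplectic degeneracy means the map \(\hook\alpha : V\to\Lambda^3V^*\) has non-trivial kernel. So there is some \(u\neq 0\) with \(u\hook\alpha=0\), that is, \(\alpha(u,a,b,c)=0\) for all \(a,b,c\in V\).

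The plan is to extend \(u\) to a linearly independent triple that witnesses failure of \autoref{def:SWnd}. Since \(\dim V=8\geq 3\), we can choose \(v,w\in V\) such that \(u,v,w\) are linearly independent. For every \(x\in V\) we then have \(\alpha(u,v,w,x)=(u\hook\alpha)(v,w,x)=0\). Hence no \(x\) satisfies \(\alpha(u,v,w,x)\neq0\), so \(\alpha\) is degenerate in the sense of \autoref{def:SWnd}. Taking the contrapositive gives the stated implication.

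The argument has no real obstacle. The one point to be careful about is the direction of the quantifiers: \autoref{def:SWnd} demands a suitable \(x\) for \emph{every} independent triple, so producing a single bad triple is enough to establish degeneracy. Completing a nonzero vector to an independent triple only requires \(\dim V\geq 3\).

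It is also worth remarking why the converse should fail, since the paper says the two notions are not equivalent. \autoref{def:SWnd} requires that \(\alpha(u,v,w,\cdot)\neq 0\) for every \(u\wedge v\wedge w\) decomposable and nonzero. Multisymplectic non-degeneracy only requires, for each \(u\neq0\), that \(\alpha(u,\cdot,\cdot,\cdot)\neq0\). A form can therefore satisfy the latter while some particular triple is annihilated. This remark is not needed for the proof.
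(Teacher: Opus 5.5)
Your proof is correct and is essentially the paper's argument: both prove the contrapositive by taking a nonzero vector annihilating \(\alpha\) (the paper writes it as \(u-v\) with \(u\hook\alpha=v\hook\alpha\)) and completing it to a triple that admits no suitable fourth vector. Your version is slightly cleaner, since you explicitly ensure the triple is linearly independent, which the paper leaves implicit.
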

\begin{proof}
It is actually easier to prove the contrapositive. Suppose \(\alpha\) is degenerate in the multisymplectic sense. Then choose a \(u,v\) with \(u\neq v\) such that \(u\hook\alpha=v\hook\alpha\), or \((u-v)\hook\alpha=0\). Choose two other linearly independent vectors \(w,x\). Then there does not exist a \(y\) such that \(\alpha(u-v,w,x,y)\neq0\). Thus \(\alpha\) is degenerate in the sense of \autoref{def:SWnd}. 

To prove that this is not an if and only if statement, let \(V=W_1\oplus W_2\) split as in \autoref{exmp:sd}
\begin{equation}
    \xi = \vol_{W_1}+\vol_{W_2} = E^1\wedge E^2\wedge E^3\wedge E^4 + F^1\wedge F^2\wedge F^3 \wedge F^4\label{eq:SLSLform}
\end{equation}
\(\xi\) is stabilised by \(\SL(4,\R)\times\SL(4,\R)\). Then \(v,w\in V\) split as
\begin{equation}
    v=v_1+v_2 \ \text{and}\ w=w_1+w_2.
\end{equation}
Taking the interior product,
\begin{equation}
    (v-w)\hook \xi = (v_1-w_1)\hook \vol_{W_1} + (v_2-w_2)\hook \vol_{W_2}
\end{equation}
Volume forms are injective, and these 3-forms are only defined on each space \(W_1,W_2\), thus this can only vanish if \(v=w\), so \(\xi\) is non-degenerate in the multisymplectic sense. To show that it is degenerate in the sense of \autoref{def:SWnd}, it suffices to check
\begin{equation}
    u = E^1,\ v=E^2 ,\ w=F^3 .
\end{equation}
One can easily see that \(w\hook v\hook u\hook \xi=0\), thus this is degenerate.
\end{proof}
\begin{note}
\citeauthor{Salamon2017} showed that the Cayley form is non-degenerate in their sense. \textcite{Kennon2021} showed independently that the Cayley form is non-degenerate in the sense of multisymplectic geometry.
\end{note}
\begin{note}
 From hereon when we refer to non-degeneracy of forms, we will mean in the sense of \autoref{def:SWnd}. We will also require the notion of non-degeneracy of bilinear forms, though this should be obvious from context.
\end{note}
We are now in a position to state the key conjecture\footnote{Salamon and Walpuski did not state this as a conjecture: we believe it is an interesting enough problem to be given as such.}.
\begin{conj}[Salamon-Walpuski]\hypertarget{conj:MainConj}{}
Let \(\alpha\) be a 4-form on \(V\). If \(\alpha\wedge\alpha=0\), then \(\alpha\) is degenerate.
\end{conj}

\begin{rem}
Certainly this is \emph{not} an if and only if statement: one can trivially find degenerate 4-forms with non-vanishing top-form. The easiest example is the form stabilised by \(\SL(4,\R)\times \SL(4,\R)\), as shown above in \eqref{eq:SLSLform}.
\end{rem}
It is not a trivial exercise to find non-degenerate 4-forms; however, the problem is made more tractable by a theorem of Salamon and Walpuski.
\begin{thm}\label{thm:nondegenequiv}
    Let \(\alpha\in \Lambda^4V^*\). Then the following are equivalent.
    \begin{itemize}
        \item \(\alpha\) is non-degenerate.
        \item \(u\hook v\hook \alpha\) is a symplectic form on \(V/\mathrm{span}\{u,v\}\) for linearly independent \(u,v\in V\).
        \item \(u\hook v\hook \alpha\wedge u\hook v\hook \alpha\wedge \alpha \) is non-zero for all linearly independent \(u,v\in V\).
    \end{itemize}
\end{thm}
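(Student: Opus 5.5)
We prove the equivalence by fixing a linearly independent pair \(u,v\in V\) and working on the 6-dimensional quotient \(Q:=V/\mathrm{span}\{u,v\}\). Set \(\omega_{uv}:=u\hook v\hook\alpha\in\Lambda^2V^*\). Since \(u\hook\omega_{uv}=v\hook\omega_{uv}=0\), the form \(\omega_{uv}\) descends to a 2-form \(\bar\omega\) on \(Q\). The first two bullets are then related by linear algebra in \(Q\), and the second and third by a top-degree computation in \(V\).

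\emph{(i)\(\Leftrightarrow\)(ii).} The form \(\bar\omega\) fails to be symplectic on \(Q\) exactly when there is a nonzero \(\bar w\in Q\) with \(\bar w\hook\bar\omega=0\). Lifting \(\bar w\) to \(w\in V\), this says \(w\notin\mathrm{span}\{u,v\}\), so \(u,v,w\) are linearly independent, and \(\alpha(u,v,w,x)=0\) for all \(x\in V\). That is precisely a degenerate triple in the sense of \autoref{def:SWnd}.

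Conversely, suppose \(u,v,w\) is a degenerate triple. Then \(\bar w\neq0\) lies in the kernel of \(\bar\omega\) for the pair \(u,v\), so (ii) fails for that pair. Quantifying over all independent pairs gives (i)\(\Leftrightarrow\)(ii).

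\emph{(ii)\(\Leftrightarrow\)(iii).} The goal is the identity
\begin{equation*}
\bigl(\omega_{uv}\wedge\omega_{uv}\wedge\alpha\bigr)(u,v,e_3,\ldots,e_8)=c\,\bigl(\omega_{uv}^{3}\bigr)(e_3,\ldots,e_8),\qquad c\neq0,
\end{equation*}
where \(e_3,\ldots,e_8\) complete \(u,v\) to a basis of \(V\) and \(c\) is a universal combinatorial constant.

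To obtain it, expand the 8-form on \(u,v,e_3,\dots,e_8\) as a sum over distributions of the eight arguments among the three factors. Any term in which \(u\) or \(v\) is fed to a factor \(\omega_{uv}\) vanishes, since \(u\hook\omega_{uv}=v\hook\omega_{uv}=0\). Hence \(u\) and \(v\) both go into \(\alpha\), and \(\alpha(u,v,\cdot,\cdot)=\omega_{uv}\), up to the sign convention for interior products. Equivalently, contract the 8-form with \(v\) and then \(u\): only the terms where the contractions hit \(\alpha\) survive, giving \(\pm\,\omega_{uv}\wedge\omega_{uv}\wedge\omega_{uv}\).

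Since an 8-form is nonzero iff its contraction by two independent vectors is nonzero, \(\omega_{uv}^2\wedge\alpha\neq0\) iff \(\omega_{uv}^3\neq0\). The latter restricts to \(\bar\omega^3\) on \(Q\). On a 6-dimensional space, \(\bar\omega^3\neq0\) iff \(\bar\omega\) is non-degenerate, which is the standard Pfaffian fact. This gives (ii)\(\Leftrightarrow\)(iii) pair by pair.

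\emph{Main obstacle.} The one delicate step is the wedge-product bookkeeping in (ii)\(\Leftrightarrow\)(iii). One must check that the constant \(c\) is nonzero (it is a positive multinomial factor, so no cancellation occurs). One must also check that \(\omega_{uv}^3\), evaluated on \(e_3,\ldots,e_8\), depends only on their classes in \(Q\), which follows because \(u,v\) lie in the kernel of \(\omega_{uv}\). The cleanest way is to choose the basis with \(e_1=u\), \(e_2=v\), so that \(\omega_{uv}\) involves only \(e^3,\dots,e^8\), and to read off the \(e^{12}\)-component of \(\alpha\) directly.
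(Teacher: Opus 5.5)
Your proof is correct. The paper gives no argument of its own here; it only cites Lemma~7.4 of Salamon--Walpuski. Your proof is therefore a self-contained replacement, and it runs as follows: a nonzero vector in the kernel of \(\bar\omega\) on \(Q\) lifts to a degenerate triple, and a double contraction takes you from the 8-form to \(\omega_{uv}^3\), where the Pfaffian criterion on the 6-dimensional quotient applies. The double contraction is the same device the paper uses in the lemma right after this theorem, where it contracts the 8-form with \(v\) to reach the \(\Gtwo\) metric formula.

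The ``main obstacle'' you flag does not actually arise. The form \(\omega_{uv}\) has even degree and satisfies \(u\hook\omega_{uv}=v\hook\omega_{uv}=0\). The antiderivation rule therefore gives exactly \(u\hook v\hook(\omega_{uv}\wedge\omega_{uv}\wedge\alpha)=\omega_{uv}\wedge\omega_{uv}\wedge\omega_{uv}\). So \(c=\pm1\), with the sign fixed only by the ordering convention for evaluating forms. No multinomial bookkeeping is needed, and there is no reason to expect \(c\) to be positive.
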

\begin{proof}
    See the proof of Lemma~7.4 in \autocite{Salamon2017} and the discussion that follows.
\end{proof}
Via the third characterisation, there is another way to state non-degeneracy.
\begin{lem}
    Let \(\alpha\in\Lambda^4 V^*\). Then \(\alpha\) is non-degenerate iff \(\forall v\in V\), \((v\hook\alpha)|_{W^*}\) is the 3-form stabilised by \(\Gtwo\), \(\phi\), where \(W=V/\mathrm{span}\{v\}\).
\end{lem}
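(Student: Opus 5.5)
The plan is to reduce the statement to the third characterisation in \autoref{thm:nondegenequiv} by contracting the relevant 8-form with \(v\). Then I would use the fact, recalled at the start of this section, that the only 3-forms in 7 dimensions defining a non-degenerate metric via \eqref{eq:G2metric} are the \(\Gtwo\) and split \(\Tilde{\mathrm{G}}_2\) forms.

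\textbf{Step 1: the 3-form on \(W\).} Fix \(v\neq0\) and put \(\varphi_v:=v\hook\alpha\). Since \(v\hook\varphi_v=0\), the form \(\varphi_v\) descends to a well-defined 3-form on \(W=V/\mathrm{span}\{v\}\), and this is what \((v\hook\alpha)|_{W^*}\) means.

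\textbf{Step 2: the key identity.} For \(u\) independent of \(v\), let \(\bar u\) be its (nonzero) class in \(W\). Set
\[\Theta:=u\hook v\hook\alpha\wedge u\hook v\hook\alpha\wedge\alpha.\]
The factors \(u\hook v\hook\alpha\) are annihilated by \(v\hook\), so the Leibniz rule gives
\[v\hook\Theta=u\hook\varphi_v\wedge u\hook\varphi_v\wedge\varphi_v .\]
The right-hand side is exactly the 7-form \(g_{\varphi_v}(\bar u,\bar u)\,\vol\) of \eqref{eq:G2metric} on \(W\). Since \(\Theta\) is a top form on \(V\) and \(v\neq0\), \(\Theta\neq0\) iff \(v\hook\Theta\neq0\). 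As \(u\) ranges over vectors independent of \(v\), \(\bar u\) ranges over all of \(W\setminus\{0\}\). Hence, by \autoref{thm:nondegenequiv}, \(\alpha\) is non-degenerate iff, for every \(v\neq0\), the \(\Lambda^7W^*\)-valued quadratic form \(\bar u\mapsto g_{\varphi_v}(\bar u,\bar u)\vol\) has no nonzero zeros on \(W\).

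\textbf{Step 3: anisotropic forms are \(\Gtwo\) forms.} A real quadratic form on \(\R^7\) that vanishes at no nonzero vector is definite, because the unit sphere is connected. A definite form is in particular non-degenerate. So \(\varphi_v\) defines a non-degenerate metric, and by the classification recalled above it lies in the \(\Gtwo\) or the \(\Tilde{\mathrm{G}}_2\) orbit. The split form induces a metric of signature \((4,3)\), which has null vectors, so it is excluded. Therefore \(\varphi_v\) is in the \(\Gtwo\) orbit.

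Conversely, if every \(\varphi_v\) is a \(\Gtwo\) form, its induced metric is definite. Then \(g_{\varphi_v}(\bar u,\bar u)\neq0\) for all \(\bar u\neq0\), so \(\Theta\neq0\) for all independent \(u,v\), and \(\alpha\) is non-degenerate by \autoref{thm:nondegenequiv}.

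\textbf{Main obstacle.} The only non-formal input is the orbit classification used in Step 3: that a 3-form whose quadratic form \eqref{eq:G2metric} is non-degenerate must be \(\Gtwo\) or \(\Tilde{\mathrm{G}}_2\). I would take this as the known result cited at the start of the section. If needed, I would justify it via Hitchin-type stability, since non-degeneracy of the metric means \(\varphi_v\) lies in an open \(\GL(7)\)-orbit. The remaining care is in checking the sign conventions in Step 2's Leibniz computation, but these affect only an overall sign, not vanishing.
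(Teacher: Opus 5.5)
Your proposal is correct and follows essentially the same route as the paper. The paper writes \(\alpha=\xi\wedge\beta+\gamma\) with \(\beta=v\hook\alpha\), then contracts \(u\hook v\hook\alpha\wedge u\hook v\hook\alpha\wedge\alpha\) with \(v\) to get \(u\hook\beta\wedge u\hook\beta\wedge\beta\); this is exactly your Leibniz-rule identity, after which the paper invokes the \(\Gtwo\) metric formula. Your Step 3 (a form with no nonzero null vectors is definite, hence lies in an open orbit, and the split \(\Tilde{\mathrm{G}}_2\) form is excluded by its null vectors) spells out the final step that the paper asserts without detail.
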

\begin{proof}
    Recall that with a preferred vector chosen (\(v\), in this case), any form can be split into
    \begin{equation}
       \alpha = \xi\wedge\beta +\gamma\label{eq:alphasplit}
    \end{equation}
    where \(\xi(v)=1\) w.l.o.g., \(v\hook\beta=0\) and \(v\hook\gamma=0\). Fix a \(v\), and consider
    \begin{equation}
        u\hook v\hook\alpha\wedge u\hook v\hook \alpha\wedge\alpha.
    \end{equation}
    Expanding in \eqref{eq:alphasplit},
    \begin{equation}
        u\hook \beta\wedge u\hook\beta \wedge(\xi\wedge \beta+\gamma).
    \end{equation}
    This is still an 8-form. We take the interior product once more with \(v\) to obtain a 7-form
    \begin{equation}
        u\hook\beta\wedge u\hook\beta \wedge\beta.
    \end{equation}
    Restricting to \(W^*\), this is exactly the formula for the metric determined by a 3-form in 7-dimensions (cf.\ \eqref{eq:G2metric}). In particular, this is non-zero iff \(\beta\) is the 3-form stabilised by \(\Gtwo\).
\end{proof}

The most fruitful approach is via this last characterisation in \autoref{thm:nondegenequiv}. Let us make this more formal by means of an area metric.

\begin{defn}
    An area metric on \(V\) is a map \(\Lambda^2 V\times \Lambda^2 V\to \R\) which is bilinear and symmetric. Equivalently, it is an element of \(S^2\Lambda^2 V^*\subset V^*\otimes V^*\otimes V^*\otimes V^*\).
\end{defn}
\begin{note}
    At this stage, we do not require non-degeneracy of this area metric. It is also argued in the literature that an area metric should satisfy the algebraic Bianchi identity (at least in 4-dimensions; see \autocite{Borissova2023} for a physical motivation): we will also not assume this, but we will come to discuss this in the context of irreducible tensors later on.
\end{note}
Before introducing the central area metric of our study, we must first define a suitable scalar density. This is because, as mentioned in the introduction, the only way we can `raise' indices of our 4-form is by use of the quasi-Hodge isomorphism. This densitises the expression, thus we need a scalar density to dedensitise it to get a \textit{bona fide} area metric.

Using a 4-form in 8-dimensions, there are \emph{at least} two scalar densities which are, \textit{prima facie}, independent. The first is quite obvious: \(\qh(\alpha\wedge\alpha)\) for some 4-form \(\alpha\). The second is less obvious, and requires us to define some additional tools.

\paragraph{Tensors as Linear Maps}
Remark that \(\alpha\) can be viewed as a linear map \(V\times V\times V\times V\to \R\) or equivalently \(\Lambda^2 V\times \Lambda^2 V\to \R\). Partial evaluation on a bivector \(X\) gives a 2-form, thus we can also think of \(\alpha\) as a map \(\Lambda^2 V\to \Lambda^2 V^*\) or an element of \(\Hom(\Lambda^2 V,\Lambda^2 V^*)\). To make this interpretation clearer, we define the maps
\begin{equation}
    \rho: S^2\Lambda^2 V\to \Hom(\Lambda^2 V^*,\Lambda^2 V), \ \rho^*:S^2\Lambda^2 V^*\to\Hom(\Lambda^2 V,\Lambda^2 V^*).
\end{equation}
Homomorphisms of vector spaces can be identified with matrices, and since \(\dim(\Lambda^2 V)=\dim(\Lambda^2 V^*)\), \(\rho^*(\alpha)\) is a square matrix; thus the determinant of \(\rho^*(\alpha)\) is well defined. This is a scalar density of degree 28 in \(\alpha\). We show in \autoref{adx:ScalarDensities} that \(\det(\rho^*(\alpha))^{1/14}\) is actually a scalar density of weight 1, of degree 2 in \(\alpha\).

Since these are, in general, independent, we have no reason \textit{a priori} to choose one over the other. For that reason, in the forthcoming, we write \(\Tilde{\chi}_\alpha\) to be one of \(\qh(\alpha\wedge\alpha)\) or \(\det(\rho^*(\alpha))^{1/14}\).

\begin{note}
    There do exist non-zero 4-forms for which both of these scalar densities vanish. We will not have to worry about such 4-forms.
\end{note}

Now we can define the main area metric we need.

\begin{defn}\label{def:GAreaMet}
Let \(\alpha\) be a 4-form on \(V\). Define \(G_\alpha\) an area metric, given by
    \begin{equation}
        G_\alpha(X,Y) = \frac{1}{\Tilde{\chi}_\alpha}\qh(X \hook \alpha\wedge Y\hook \alpha\wedge\alpha)
    \end{equation}
for \(X,Y\in\Lambda^2 V\).
\end{defn}
To determine the signature of \(G_\alpha\), we note that as above we can think of \(G_\alpha\) as an element of \(\Hom(\Lambda^2 V,\Lambda^2 V^*)\), and thus \(\rho^*(G_\alpha)\) as a linear map, described above. One can then easily show the following.
\begin{lem}\label{lem:MatrixGalpha}
    \(\rho^*(G_\alpha)\) is congruent (in the sense of linear algebra) to \(\rho(\qh\alpha)\) and thus \(G_\alpha\) is always indefinite.
\end{lem}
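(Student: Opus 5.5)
We need to show \(\rho^*(G_\alpha)\) is congruent to \(\rho(\qh\alpha)\), i.e.\ \(\rho^*(G_\alpha)=P^{T}\rho(\qh\alpha)P\) for some invertible (or at least some) linear map \(P\), and then deduce indefiniteness from the signature of \(\rho(\qh\alpha)\). The plan is to write \(G_\alpha\) in index form and recognise it as a composition of three maps \(\Lambda^2 V\to\Lambda^2 V^*\to\Lambda^2 V\to\Lambda^2V^*\to\ldots\). Concretely, \(X\hook\alpha\) is the 2-form \(\rho^*(\alpha)X\), and likewise \(Y\hook\alpha=\rho^*(\alpha)Y\). The expression \(\qh(\sigma\wedge\tau\wedge\alpha)\) for 2-forms \(\sigma,\tau\) is, up to a combinatorial constant, \(\ilc^{abcd\,efgh}\sigma_{ab}\tau_{cd}\alpha_{efgh}=c\,\sigma_{ab}\tau_{cd}(\qh\alpha)^{abcd}\). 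In other words it is the bilinear pairing on \(\Lambda^2V^*\) given by the 4-vector \(\qh\alpha\), viewed through \(\rho\) as an element of \(\Hom(\Lambda^2V^*,\Lambda^2V)\). This gives
\begin{equation*}
\rho^*(G_\alpha)=\frac{c}{\Tilde\chi_\alpha}\,\rho^*(\alpha)^{T}\,\rho(\qh\alpha)\,\rho^*(\alpha),
\end{equation*}
where the transpose is harmless because \(\rho^*(\alpha)\) is symmetric. When \(\rho^*(\alpha)\) is invertible this is a congruence. In the degenerate case I would still call it a congruence in the weak sense \(P^T M P\), or else restrict to \(\alpha\) with \(\det\rho^*(\alpha)\neq0\), which is needed anyway when \(\Tilde\chi_\alpha\) is the determinant density.

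The first step is therefore a careful index bookkeeping check that the wedge-then-\(\qh\) operation really factors through \(\qh\alpha\) with the right symmetry. The scalar \(c/\Tilde\chi_\alpha\) only rescales, and a sign can at worst swap the signature, which does not affect indefiniteness.

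The second step is to show that \(\rho(\qh\alpha)\) is indefinite, equivalently that any 4-form's pairing \(S^2\Lambda^2\) is indefinite. I would restrict to decomposable bivectors. Pick \(X=e_1\wedge e_2\) with \(e_i\) a basis, and note that the quadratic form \(Q(X)=\alpha(X,X)\) vanishes on every decomposable \(X\), since \(\alpha(e_1,e_2,e_1,e_2)=0\) by antisymmetry. The form \(Q\) is therefore isotropic on a spanning set, so it cannot be definite unless it is zero. It is nonzero as a symmetric form whenever \(\alpha\neq0\): if \(\alpha_{1234}\neq0\), then \(X=e_{12}+e_{34}\) gives \(Q(X)=2\alpha_{1234}\neq0\). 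A nonzero quadratic form with a nonzero isotropic vector is indefinite or degenerate semidefinite. To exclude semidefiniteness, use \(Q(e_{12}\pm e_{34})=\pm2\alpha_{1234}\), which takes both signs. Applying the same argument to the 4-vector \(\qh\alpha\) on \(\Lambda^2V^*\) shows \(\rho(\qh\alpha)\) is indefinite, and congruence preserves this, so \(G_\alpha\) is indefinite whenever it is nonzero.

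I expect the main obstacle to be the first step: showing that the 8-form \(X\hook\alpha\wedge Y\hook\alpha\wedge\alpha\), after \(\qh\), equals exactly the \(\qh\alpha\)-pairing of the two 2-forms, with consistent factorial normalisations and the density weights cancelling. The indefiniteness argument itself is elementary.
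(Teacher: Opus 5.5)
Your proposal is correct and is essentially the paper's proof. The paper also writes \(\rho^*(G_\alpha)\propto\frac{1}{\Tilde{\chi}_\alpha}\rho^*(\alpha)\rho(\qh\alpha)\rho^*(\alpha)\), uses \(\rho^*(\alpha)=\rho^*(\alpha)^T\) with \(\det\rho^*(\alpha)\neq0\) to get a genuine congruence, and concludes by Sylvester's law. Your \(e_{12}\pm e_{34}\) computation usefully supplies the indefiniteness of \(\rho(\qh\alpha)\), which the paper only asserts.

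Drop the ``weak congruence'' fallback and the claim that \(G_\alpha\) is indefinite whenever it is nonzero. For singular \(P=\rho^*(\alpha)\), the form \(P^TMP\) only sees \(M\) on the image of \(P\), where indefiniteness need not survive. So, as in the paper, the argument requires \(\det\rho^*(\alpha)\neq0\).
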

\begin{proof}
    Trivial. It is an easy exercise in index manipulations to show that
    \begin{equation}
    (G_\alpha)_{ijkl} = \frac{1}{4\Tilde{\chi}_\alpha} \alpha_{ijab}(\qh\alpha)^{abcd}\alpha_{cdkl}
    \end{equation}
    where the indices of \(\alpha\) are raised by the action of \(\qh\) (as per \autoref{defn:qh}). By use of \(\rho\) and \(\rho^*\), the above can be realised as the linear map
    \begin{equation}
        \rho^*(G_\alpha) \propto \frac{1}{\Tilde{\chi}_\alpha} \rho^*(\alpha)\rho(\qh\alpha)\rho^*(\alpha)
    \end{equation}
    with juxtaposition denoting composition. \(\rho^*(\alpha)=\rho^*(\alpha)^T\) so, if \(\det(\rho^*(\alpha))\neq0\), then \(\rho^*(G_\alpha)\) and \(\rho(\qh\alpha)\) are congruent. By Sylvester's Law of Inertia, this implies \(\rho(\qh\alpha)\) and \(\rho^*(G_\alpha)\) have the same signature: the former has indefinite signature, hence the claim follows.
\end{proof}
We also see here that non-degeneracy of \(G_\alpha\) is equivalent to \(\det(\rho^*(\alpha))\neq0\).

It is a well-known fact of representation theory \autocite{Russo} that the space \(S^2\Lambda^2 V^*\) splits under the Bianchi antisymmetrisation map into
\begin{equation}
    S^2\Lambda^2 V^* = \Lambda^4 V^*\oplus S^2_B \Lambda^2 V^*\label{eq:BianchiSplit}
\end{equation}
where \(S^2_B\Lambda^2 V^*\) are tensors of \(S^2\Lambda^2 V^*\) satisfying the algebraic Bianchi identity. This decomposition is independent of dimension and other structure. Yet in our case, we have a canonical 4-form which builds our area metric. Thus we should expect the two to be somehow related via this splitting, and indeed they are.
\begin{thm}\label{thm:GSplit}
    Let \(\alpha\) be a 4-form. \(G_\alpha\) decomposes (in the manner of \eqref{eq:BianchiSplit}) as
    \begin{equation}
        G_\alpha = \frac{\qh(\alpha\wedge\alpha)}{2\Tilde{\chi}_\alpha}\alpha - H_\alpha
    \end{equation}
    where \(H_\alpha\in S^2_B\Lambda^2 V^*\), given explicitly by its action on simple bivectors (or quadruples of vectors)
    \begin{equation}
        H_\alpha(u,v,w,x) = \frac{1}{\Tilde{\chi}_\alpha}\qh(u\hook v\hook\alpha\wedge w\hook \alpha\wedge x\hook\alpha).
    \end{equation}
\end{thm}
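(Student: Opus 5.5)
The plan is to derive the decomposition from the vanishing of forms whose degree exceeds eight. Contracting these with further vectors and expanding each contraction with the graded Leibniz rule for \(\hook\) gives linear relations among top-forms. Applying \(\qh\) and dividing by \(\Tilde{\chi}_\alpha\) then turns them into identities between \(G_\alpha\), \(\alpha\) and \(H_\alpha\). Throughout I work on simple bivectors \(X=u\wedge v\), \(Y=w\wedge x\), which suffices by multilinearity.

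\textbf{Step 1: the auxiliary identity.} The 9-form \((w\hook v\hook u\hook\alpha)\wedge\alpha\wedge\alpha\) vanishes. Contracting it with \(x\), and using \(x\hook(\alpha\wedge\alpha)=2\,x\hook\alpha\wedge\alpha\) (since \(\alpha\) is even), gives
\[
\alpha(u,v,w,x)\,\alpha\wedge\alpha = 2\,(w\hook v\hook u\hook\alpha)\wedge x\hook\alpha\wedge\alpha ,
\]
up to a sign that I will fix carefully. This lets me eliminate every "mixed" top-form of the shape (1-form)\(\wedge\)(3-form)\(\wedge\alpha\).

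\textbf{Step 2: the main identity.} The 10-form \(u\hook v\hook\alpha\wedge\alpha\wedge\alpha\) vanishes. I contract it first with \(w\) and then with \(x\), and expand. The resulting terms are of four types:
\begin{itemize}
\item a scalar multiple \(\alpha(u,v,w,x)\,\alpha\wedge\alpha\);
\item two mixed terms \((w\hook u\hook v\hook\alpha)\wedge x\hook\alpha\wedge\alpha\) and \((x\hook u\hook v\hook\alpha)\wedge w\hook\alpha\wedge\alpha\), both of which are rewritten via Step 1 as further multiples of \(\alpha(u,v,w,x)\,\alpha\wedge\alpha\);
\item the \(G\)-type term \(u\hook v\hook\alpha\wedge x\hook w\hook\alpha\wedge\alpha\), with coefficient \(2\);
\item the \(H\)-type term \(u\hook v\hook\alpha\wedge w\hook\alpha\wedge x\hook\alpha\), with coefficient \(2\).
\end{itemize}
Applying \(\qh\) and dividing by \(\Tilde{\chi}_\alpha\) yields \(G_\alpha = c\,\frac{\qh(\alpha\wedge\alpha)}{\Tilde{\chi}_\alpha}\,\alpha - H_\alpha\) for an explicit constant \(c\), which should come out to \(1/2\). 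The main bookkeeping obstacle is tracking the signs from the graded Leibniz rule and the orderings of \(u,v,w,x\). I would fix these by checking the final constant against a known case, namely the Cayley form, where \(G_\Phi\) is known. Alternatively I would use the total-antisymmetrisation argument of Step 3, which independently determines \(c\).

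\textbf{Step 3: \(H_\alpha\) lies in \(S^2_B\Lambda^2V^*\).} By \autoref{def:GAreaMet}, \(G_\alpha\) lies in \(S^2\Lambda^2V^*\), and \(\alpha\) lies in \(\Lambda^4V^*\subset S^2\Lambda^2V^*\). Hence \(H_\alpha\in S^2\Lambda^2V^*\) automatically, and it remains only to show that its total antisymmetrisation vanishes. Equivalently, by \eqref{eq:BianchiSplit}, I must show that the \(\Lambda^4\)-component of \(G_\alpha\) is exactly \(\frac{\qh(\alpha\wedge\alpha)}{2\Tilde{\chi}_\alpha}\alpha\).

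To see this, I antisymmetrise \(u\hook v\hook\alpha\wedge w\hook\alpha\wedge x\hook\alpha\) over \(u,v,w,x\). Expanding the Step 1 identity in all orderings, I expect the skew part of \(H_\alpha\) to be a multiple of \(\alpha(u,v,w,x)\,\qh(\alpha\wedge\alpha)/\Tilde{\chi}_\alpha\). Combining this with Step 2 should force that multiple to be zero and \(c=\tfrac12\), since the two must agree. I expect this consistency computation, rather than any conceptual step, to be the hardest part.
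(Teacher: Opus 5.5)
Your Steps 1 and 2 are the paper's proof in slightly different packaging. The paper contracts the vanishing 9-form \(u\hook v\hook\alpha\wedge w\hook\alpha\wedge\alpha\) with \(x\), then removes the mixed term using exactly your Step 1 identity. Since \(w\hook(u\hook v\hook\alpha\wedge\alpha\wedge\alpha)=(w\hook u\hook v\hook\alpha)\wedge\alpha\wedge\alpha+2\,u\hook v\hook\alpha\wedge w\hook\alpha\wedge\alpha\), your doubly contracted 10-form is just the paper's identity plus a Step-1-type one. Your Step 1 identity is already sign-correct as written, and direct bookkeeping gives \(c=\tfrac12\). No check against the Cayley form is needed.

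The gap is in Step 3, the claim \(H_\alpha\in S^2_B\Lambda^2V^*\). Put \(S(u,v,w,x)=u\hook v\hook\alpha\wedge w\hook x\hook\alpha\wedge\alpha\) and \(T(u,v,w,x)=u\hook v\hook\alpha\wedge w\hook\alpha\wedge x\hook\alpha\), so that \(G_\alpha=\qh S/\Tilde{\chi}_\alpha\) and \(H_\alpha=\qh T/\Tilde{\chi}_\alpha\).

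\begin{itemize}
\item The Step 1 identity contains no \(T\)-type term, so antisymmetrising it tells you nothing about the skew part of \(H_\alpha\).
\item Antisymmetrising Step 2 gives only \(\mathrm{Alt}(S)+\mathrm{Alt}(T)=\tfrac12\alpha(u,v,w,x)\,\alpha\wedge\alpha\), which is one equation in two unknowns.
\end{itemize}
So no ``consistency'' between Steps 1 and 2 can force \(\mathrm{Alt}(T)=0\). For the same reason, Step 3 cannot determine \(c\) independently of Step 2.

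What is missing is an identity involving the \(H\)-type term alone. The natural one comes from the vanishing 9-form \(u\hook\alpha\wedge v\hook\alpha\wedge w\hook\alpha\). Contract it with \(x\) and move the (even) 2-forms to the front:
\[
T(x,u,v,w)-T(x,v,u,w)+T(x,w,u,v)=0.
\]
Using antisymmetry of \(T\) in its last two slots, this becomes \(T(x,u,v,w)+T(x,v,w,u)+T(x,w,u,v)=0\), which is the algebraic Bianchi identity.

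You correctly observed that \(H_\alpha\in S^2\Lambda^2V^*\) because \(G_\alpha\) and \(\alpha\) are. Combined with the Bianchi identity, this gives \(H_\alpha\in S^2_B\Lambda^2V^*\). Hence \(\frac{\qh(\alpha\wedge\alpha)}{2\Tilde{\chi}_\alpha}\alpha\) is exactly the \(\Lambda^4\)-component of \(G_\alpha\).

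The paper's own proof stops once the identity for \(G_\alpha\) is derived and simply asserts \(H_\alpha\in S^2_B\Lambda^2V^*\). So you are right that more is needed here; your proposed mechanism just does not supply it.
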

\begin{proof}
    Expanding out
    \begin{equation}
        x\hook(u\hook v\hook\alpha \wedge w\hook \alpha\wedge\alpha) = x\hook u\hook v\hook\alpha\wedge w\hook\alpha\wedge\alpha + u\hook v\hook \alpha\wedge x\hook w\hook \alpha \wedge\alpha - u\hook v\hook\alpha\wedge w\hook\alpha\wedge x\hook\alpha.
    \end{equation}
    The l.h.s.\ vanishes by degree. Noting our definitions of \(G_\alpha\) and \(H_\alpha\), we see
    \begin{equation}
        0 = x\hook u\hook v\hook\alpha\wedge w\hook\alpha\wedge\alpha - \Tilde{\chi}_\alpha \qhv G_\alpha(u,v,w,x) - \Tilde{\chi}_\alpha \qhv H_\alpha(u,v,w,x).
    \end{equation}
    To deal with the remaining term, expand out
    \begin{equation}
        w\hook(x\hook u\hook v\hook\alpha \wedge \alpha\wedge \alpha) = \alpha(v,u,x,w) \alpha\wedge\alpha - 2 x\hook u \hook v\hook\alpha\wedge w\hook\alpha\wedge\alpha.
    \end{equation}
    Once again, the l.h.s.\ vanishes by degree, thus
\begin{equation}
    x\hook u\hook v\hook\alpha\wedge w\hook\alpha\wedge\alpha = \frac{1}{2}\alpha(u,v,w,x)\alpha\wedge\alpha.
\end{equation}
Plugging this back in, 
\begin{equation}
    0 =\frac{1}{2} \alpha(u,v,w,x)\alpha\wedge\alpha - \Tilde{\chi}_\alpha G_\alpha(u,v,w,x) - \Tilde{\chi}_\alpha H_\alpha(u,v,w,x).
\end{equation}
Finally, applying \(\qh\) and dividing by \(\Tilde{\chi}_\alpha\), we arrive at
\begin{equation}
    G_\alpha = \frac{1}{2}\frac{\qh(\alpha\wedge\alpha)}{\Tilde{\chi}_\alpha}\alpha - H_\alpha.\qedhere
\end{equation}
\end{proof}
Note that when evaluating \(G_\alpha(u,v,u,v)\) to determine non-degeneracy via \autoref{thm:nondegenequiv}, this is only sensitive to \(H_\alpha(u,v,u,v)\) by the total antisymmetry of \(\alpha\). As such, understanding non-degeneracy can be reframed in terms of understanding \(H_\alpha\). In particular, if \(H_\alpha\) is definite, then \(G_\alpha(u,v,u,v)\) will be non-zero. This leads us to the following definition.

\begin{defn}\label{defn:StrongND}
    Let \(\alpha\) be a 4-form on \(V\). We say \(\alpha\) is \emph{strongly non-degenerate} if \(H_\alpha\) is definite.
\end{defn}

We say \emph{strong} since strong non-degeneracy clearly implies non-degeneracy. Immediately we have the following corollary.

\begin{cor}\hypertarget{cor:noSDQzero}{}
    There are no strongly non-degenerate 4-forms with vanishing self-wedge.
\end{cor}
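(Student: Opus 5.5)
We argue by contradiction, using the decomposition of \autoref{thm:GSplit} together with the signature statement of \autoref{lem:MatrixGalpha}. Suppose \(\alpha\) is strongly non-degenerate and \(\alpha\wedge\alpha=0\). Then \(\qh(\alpha\wedge\alpha)=0\), so the only usable scalar density is \(\Tilde{\chi}_\alpha=\det(\rho^*(\alpha))^{1/14}\). For \(H_\alpha\), and hence \(G_\alpha\), to be defined at all, this density must be non-zero. In particular \(\det(\rho^*(\alpha))\neq 0\), which is exactly the hypothesis needed in \autoref{lem:MatrixGalpha}.

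With \(\qh(\alpha\wedge\alpha)=0\), \autoref{thm:GSplit} collapses to
\begin{equation*}
G_\alpha = -H_\alpha .
\end{equation*}
Strong non-degeneracy (\autoref{defn:StrongND}) says that \(H_\alpha\) is definite as an element of \(S^2\Lambda^2V^*\), i.e.\ as a symmetric bilinear form on \(\Lambda^2 V\). Hence \(G_\alpha\) is definite as well, and so is its matrix \(\rho^*(G_\alpha)\).

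This contradicts \autoref{lem:MatrixGalpha}. There, \(\det(\rho^*(\alpha))\neq0\) gives that \(\rho^*(G_\alpha)\) is congruent to \(\rho(\qh\alpha)\), and Sylvester's law of inertia makes it indefinite. The indefiniteness of \(\rho(\qh\alpha)\) holds for any 4-form: the quadratic form \(X\mapsto \alpha(X,X)\) on \(\Lambda^2 V\) (or its dual version) has vanishing trace on suitable bases. Concretely, on a simple bivector \(u\wedge v\) it gives \(\alpha(u,v,u,v)=0\). A definite form can never vanish on a non-zero vector, so any non-zero 4-form gives an indefinite form on bivectors. This contradiction proves the corollary.

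The main point needing care is the case split on \(\Tilde{\chi}_\alpha\). If \(\Tilde{\chi}_\alpha\) were taken to be \(\qh(\alpha\wedge\alpha)\), then \(H_\alpha\) would simply be undefined, and the statement would hold vacuously. So the substantive case is the determinant density, where we must confirm that its non-vanishing delivers the invertibility used in the congruence argument. A secondary subtlety is that ``definite'' should be read on all of \(\Lambda^2V\), not only on simple bivectors. If only definiteness on simple bivectors were intended, one can note directly that \(G_\alpha(u,v,u,v)=-H_\alpha(u,v,u,v)\) for all \(u,v\) anyway. In that reading we would then appeal to the full-space indefiniteness via the congruence, which is the step to check most carefully.
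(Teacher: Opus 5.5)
Your argument is correct and is the paper's proof: with \(\alpha\wedge\alpha=0\), \autoref{thm:GSplit} gives \(G_\alpha=-H_\alpha\), and the indefiniteness of \(G_\alpha\) from \autoref{lem:MatrixGalpha} rules out a definite \(H_\alpha\). Your additions fill gaps the paper leaves implicit: \(\Tilde{\chi}_\alpha\) must be the determinant density, which supplies the invertibility the congruence needs, and \(\rho(\qh\alpha)\) has zero diagonal in the basis \(e^i\wedge e^j\), so it is indefinite. Drop your closing aside, though. If definiteness were only required on simple bivectors, full-space indefiniteness of \(G_\alpha\) would give no contradiction; that reading makes strong non-degeneracy coincide with non-degeneracy and turns the corollary into the conjecture itself. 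The \(t=1/2\) example in \autoref{stronndandnd} confirms the paper means definiteness on all of \(\Lambda^2V\).
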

\begin{proof}
    Suppose \(\alpha\wedge\alpha=0\). Then, from \autoref{thm:GSplit}, \(G_\alpha =-H_\alpha \). But \(G_\alpha\) is indefinite by \autoref{lem:MatrixGalpha}, so \(H_\alpha\) is also indefinite, thus \(\alpha\) cannot be strongly non-degenerate.
\end{proof}
We end by summarising the state of the conjecture. Salamon and Walpuski's conjecture asks if
\begin{equation}
    \alpha\wedge\alpha=0 \implies \alpha\ \text{degenerate?}
\end{equation}
By introducing the notion of strong non-degeneracy (cf.\ \autoref{defn:StrongND}), we have shown that 
\begin{equation}
     \text{strong non-degeneracy}\implies \text{non-degeneracy}
\end{equation}
and that if
\begin{equation}
    \alpha\text{ is strongly non-degenerate}\implies \alpha\wedge\alpha\neq0
\end{equation}
by \hyperlink{cor:noSDQzero}{the above corollary}. Via a counterexample, one can prove the following.
\begin{prop}\label{stronndandnd}
    Strong non-degeneracy and non-degeneracy are not equivalent. 
\end{prop}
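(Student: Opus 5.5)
We exhibit an explicit 4-form \(\alpha\) that is non-degenerate in the sense of \autoref{def:SWnd} but whose \(H_\alpha\) is indefinite. The natural place to look is among forms already shown to be well behaved, namely the one-parameter family \(\beta_t\) from the proof of \autoref{thm:CayleyNotUnique}, or the three-parameter family \(\eta\) of \autoref{exmp:sd}. The plan has three steps.

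\textbf{Step 1: non-degeneracy.} Use the third characterisation of \autoref{thm:nondegenequiv}: \(\alpha\) is non-degenerate iff \(u\hook v\hook\alpha\wedge u\hook v\hook\alpha\wedge\alpha\neq 0\) for all linearly independent \(u,v\). For \(\eta\) (or \(\beta_t\)), the stabiliser contains \(\Sp(2)\) (respectively \(\SU(4)\)). This lets us reduce \(u\) to a fixed unit vector \(e_1\) and \(v\) to a small family of representatives orthogonal to \(u\). The expression above is then a polynomial in a few parameters, and we check that it has no real zeros. Along the way we compute \(\qh(\alpha\wedge\alpha)\), for instance \(2(3+4t^2)\) for \(\beta_t\), so that \(\Tilde\chi_\alpha\) can be taken to be non-zero.

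\textbf{Step 2: indefiniteness of \(H_\alpha\).} By \autoref{thm:GSplit}, \(H_\alpha=\frac{\qh(\alpha\wedge\alpha)}{2\Tilde\chi_\alpha}\alpha-G_\alpha\). By \autoref{lem:MatrixGalpha}, \(\rho^*(G_\alpha)\) is congruent to \(\rho(\qh\alpha)\) whenever \(\det\rho^*(\alpha)\neq0\). So it suffices to find a parameter value for which the eigenvalues of the \(28\times 28\) symmetric matrix \(\rho^*(H_\alpha)\) take both signs.

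The representation theory makes this tractable. Under the stabiliser, \(\Lambda^2V\) splits into irreducibles, for example \(\mathfrak{sp}(2)\oplus\ldots\) for \(\eta\), and \(\rho^*(\alpha)\) and \(\rho^*(H_\alpha)\) act as scalars on isotypic pieces. Their eigenvalues can therefore be read off by evaluating on one bivector in each piece, such as \(e_1\wedge e_2\) and combinations like \(e_{12}\pm e_{34}\). The aim is to find one bivector \(X\) with \(H_\alpha(X,X)>0\) and another \(Y\) with \(H_\alpha(Y,Y)<0\).

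Simple bivectors will not do this. On them \(H_\alpha(u,v,u,v)=-G_\alpha(u,v,u,v)\), which has constant sign by non-degeneracy, so the witnesses \(X,Y\) must be non-simple.

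\textbf{Step 3: choice of parameters.} The Cayley point is presumably strongly non-degenerate; this should be checked. We then deform away from it, for example letting \(t\) vary in \(\beta_t\), or moving \((a_1,a_2,a_3)\) towards the Kraines form or beyond. We track the eigenvalues of \(\rho^*(H_\alpha)\) on the irreducible pieces and locate a value where some eigenvalue changes sign while the Step 1 polynomial still has no real zeros. Non-degeneracy is an open condition, so a sign change occurring strictly inside the non-degenerate region suffices.

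\textbf{Main obstacle.} It is not clear in advance that such a sign change happens within these symmetric families. If it does not, we would perturb by a small generic 4-form: non-degeneracy is open, and we can try to push an eigenvalue of \(H_\alpha\) that is already small through zero. The explicit eigenvalue and positivity computations are routine but heavy, and as elsewhere in the paper we would carry them out in Mathematica.
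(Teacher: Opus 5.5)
Your outline follows the paper's route: take the family \(\beta_t\) of \eqref{eq:alphat}, get non-degeneracy from the third item of \autoref{thm:nondegenequiv}, then show \(H_{\beta_t}\) fails to be definite at some parameter. Your observation is also correct that \(H_\alpha(u,v,u,v)=-G_\alpha(u,v,u,v)\) has a fixed sign on simple bivectors, so the bivector of the other sign must be non-simple.

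The gap is that the proposition is a pure existence statement, and the proposal never produces the witness. You explicitly leave open whether a sign change occurs in \(\beta_t\) or \(\eta\). The fallback (perturb by a small generic 4-form to push an ``already small'' eigenvalue through zero) is not an argument, since nothing guarantees such an eigenvalue exists or that a perturbation moves it through zero. As written, this is a search strategy rather than a proof.

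A smaller point: \(\rho^*(H_\alpha)\) acts by a scalar only on irreducible summands of multiplicity one. Under \(\SU(4)\), \(\Lambda^2\R^8\) is \(\R\oplus\mathfrak{su}(4)\) plus two copies of the same 6-dimensional irreducible, so Schur's lemma only gives a \(2\times2\) block there. Under \(\Sp(2)\) the multiplicities are higher still.

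The missing content is what the paper supplies. With \(\Tilde{\chi}_{\beta_t}=\qh(\beta_t\wedge\beta_t)\), one has the closed form \(G_{\beta_t}(u,v,u,v)=\frac{3}{3+4t^2}\bigl((1-t^2)\omega(u,v)^2+t^2(\|u\|^2\|v\|^2-\langle u,v\rangle^2)\bigr)\). This makes your Step 1 immediate. At \(t=1/2\) it is a positive combination. More generally, since \(0\le\omega(u,v)^2\le\|u\|^2\|v\|^2-\langle u,v\rangle^2\), it is positive for every \(t\neq0\).

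The sign change does occur: \(\det\rho^*(H_{\beta_t})\) vanishes at \(|t|=1/2\) and \(|t|=\sqrt{3/2}\). The paper uses the boundary value \(t=1/2\), where \(H_{\beta_{1/2}}\) is degenerate and therefore not definite. So you do not need eigenvalues of both signs.

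Your isotypic approach recovers this concretely. Extending bilinearly to bivectors, \autoref{thm:GSplit} gives \(H_{\beta_t}(X,X)=\tfrac12\beta_t(X,X)-G_{\beta_t}(X,X)\). For the non-simple \(X=e_1\wedge e_3-e_2\wedge e_4\) this equals \(\frac{2-8t^2}{3+4t^2}\). For \(X=\sum_i e_{2i-1}\wedge e_{2i}\) it equals \(\frac{12(2t^2-3)}{3+4t^2}\). Hence \(H_{\beta_{1/2}}\) has a non-zero isotropic vector. Moreover, \(H_{\beta_t}\) is genuinely indefinite for \(0<|t|<1/2\) and for \(|t|>\sqrt{3/2}\), where \(\beta_t\) remains non-degenerate.
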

\begin{proof}
    Take \(\beta_t\) as defined in \eqref{eq:alphat}. Calculating \(G_{\beta_t}(u,v,u,v)\) (and choosing \(\Tilde{\chi}_{\beta_t}=\qh(\beta_t\wedge\beta_t)\)) one finds
    \begin{equation}
    G_{\beta_t}(u,v,u,v) = \frac{3}{3+4t^2} (  (1-t^2) \omega(u,v)^2 +t^2 ( \|u\|^2 \|v\|^2 - \langle u,v\rangle^2 )) = -H_{\beta_t}(u,v,u,v).
    \end{equation}
    This is strongly non-degenerate for \(t\in(1/2,\sqrt{3/2})\cup(-\sqrt{3/2},-1/2)\): at \(t=\pm 1/2\) and \(t=\pm\sqrt{3/2}\), \(H_{\beta_{t}}\) becomes indefinite (in fact, \(\det(\rho^*(H_{\beta_t}))=0\) at these points). Consider one of these points, \(t=1/2\). Here,
    \begin{equation}
        G_{\beta_{1/2}}(u,v,u,v) = \frac{3}{16}(3 \omega(u,v)^2 + (\|u\|^2\|v\|^2-\langle u,v\rangle^2) )>0
    \end{equation}
    thus \(\beta_{1/2}\) is non-degenerate. Hence \(\beta_{1/2}\) is non-degenerate but not strongly non-degenerate.
\end{proof}
It is also worth noting the relationship between metric 4-forms and non-degenerate 4-forms. Since \(\alpha\wedge\alpha\) appears in the denominator of \(q_\alpha\) \eqref{eq:q}, it is a necessary (but not sufficient) condition for a 4-form to have non-vanishing self-wedge in order to define a metric, thus
\begin{equation}
\alpha\text{ is metric}\implies \alpha\wedge\alpha\neq0.
\end{equation}
As discussed in the proof of \autoref{stronndandnd}, \(\beta_t\) is strongly non-degenerate for \(t\in(1/2,\sqrt{3/2})\cup(-\sqrt{3/2},-1/2)\); however, outside of this interval, \(\beta_t\) is still metric (per \eqref{eq:gbeta}), so long as \(t\neq0\). Thus strong non-degeneracy does not imply that a 4-form is metric\footnote{The converse may be true, but we could not prove it.}.

\subsection*{Acknowledgements}
S.C.\ is grateful to Jacek Rzemieniecki, Joseph Duthie, and Tathagata Ghosh for encouragement to write this note, to Adam Shaw and Henrik Naujoks for their comments, and to Kirill Krasnov for his detailed comments, feedback, and support throughout. S.C.\ is indebted to Alex Gu for pointing out that a conjecture in a previous version of this document is not true, and the counterexample is now used as a proof of \autoref{stronndandnd}. S.C.\ is supported by the Engineering and Physical Sciences Research Council EP/W524402/1.
\printbibliography

\appendix
\section{Canonical Scalar Densities}\label{adx:ScalarDensities}
There are \emph{at least} two independent scalar densities one can construct using a canonical 4-form (using \(\alpha\) throughout). The first is obvious: \(\qh(\alpha\wedge\alpha)\). The other is less obvious.
\begin{lem}
Let \(\alpha\) be a 4-form. \(\det(\rho^*(\alpha))^{1/14}\)  is a scalar density of weight \(+1\).
\end{lem}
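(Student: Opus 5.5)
The plan is to compute how \(\det(\rho^*(\alpha))\) transforms under a change of basis of \(V\). Fix a basis \(\{e_i\}\) of \(V\) and the induced basis \(\{e_i\wedge e_j\}_{i<j}\) of \(\Lambda^2 V\), with the dual basis of \(\Lambda^2 V^*\). In these bases \(\rho^*(\alpha)\) is the \(28\times 28\) matrix \(M_{[ij][kl]}=\alpha_{ijkl}\). Under a change of basis \(e_i\mapsto P^j{}_i e_j\) with \(P\in\GL(V)\), the components transform as \(\alpha_{ijkl}\mapsto P^a{}_iP^b{}_jP^c{}_kP^d{}_l\alpha_{abcd}\). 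In matrix form this reads \(M\mapsto (\Lambda^2P)^T M(\Lambda^2 P)\), where \(\Lambda^2P\) is the induced map on \(\Lambda^2V\). This is exactly the congruence structure already used in the proof of \autoref{lem:MatrixGalpha}.

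Taking determinants gives \(\det M\mapsto \det(\Lambda^2 P)^2\det M\). The key step is the classical identity \(\det(\Lambda^2 P)=(\det P)^{n-1}\) for \(n=\dim V\). I would prove it by density: it suffices to check diagonalisable \(P\), which are dense in \(\GL(n,\mathbb{C})\), and both sides are polynomial. For \(P\) with eigenvalues \(\lambda_i\), the operator \(\Lambda^2P\) has eigenvalues \(\lambda_i\lambda_j\) for \(i<j\). Each \(\lambda_i\) then appears in \(n-1\) of these pairs, so the product is \(\prod_i\lambda_i^{\,n-1}\). With \(n=8\) this gives \(\det(\rho^*(\alpha))\mapsto(\det P)^{14}\det(\rho^*(\alpha))\).

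So \(\det(\rho^*(\alpha))\) is a scalar density of weight \(14\), possibly up to the sign convention for weights, which I would fix by matching against \(\qh(\alpha\wedge\alpha)\). That quantity transforms with a single power of \(\det P\), since \(\alpha\wedge\alpha\) is a top form contracted with \(\ilc\). Taking the 14th root then gives weight \(+1\). The degree count also works: \(\det\rho^*(\alpha)\) is homogeneous of degree 28 in \(\alpha\), so the root has degree \(2\), matching \(\qh(\alpha\wedge\alpha)\). Because \(14\) is even, the root is only real when \(\det\rho^*(\alpha)\geq0\) and is otherwise defined up to a choice of branch. I would remark that for \(P\) with \(\det P<0\) one gets \(|\det P|\) or a sign ambiguity. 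This is consistent with the absolute values already taken in \eqref{eq:q}.

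The main obstacle is bookkeeping rather than substance. I would need to make sure the factors of \(\tfrac12\) from identifying \(\Lambda^2V\) with antisymmetric tensors do not affect the transformation law; they only rescale the determinant by a constant. I would also check that the weight convention agrees with the paper's convention for \(\ilc\) and \(\lc\), so that the answer is \(+1\) and not \(-1\).
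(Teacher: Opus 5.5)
Your proposal is correct and is essentially the paper's proof: you use the congruence \(\rho^*(\alpha)\mapsto(\Lambda^2P)^T\rho^*(\alpha)\,\Lambda^2P\), take determinants, and apply \(\det(\Lambda^2P)=(\det P)^{7}\), which the paper cites as the Sylvester--Franke theorem while you prove this \(p=2\) case directly via the eigenvalues \(\lambda_i\lambda_j\) and a density argument. Your caveat about the even root is a genuine point the paper's proof passes over: the real root transforms with \(|\det P|\) and exists only when \(\det\rho^*(\alpha)\geq0\), and this fails for the Cayley form, whose eigenvalues \(3\) on \(\Lambda^2_7\) and \(-1\) on \(\Lambda^2_{21}\) give determinant \(-3^7\) in an orthonormal basis.
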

\begin{proof}
    Under a change of basis, \(\alpha\) transforms as
    \begin{equation}
        \alpha_{abcd}\mapsto g_a{}^i g_b{}^j g_c{}^k g_d{}^l \alpha_{ijkl}.
    \end{equation}
    Notating \(\bigwedge^2 g\) the induced action of \(g\) on \(\Lambda^2 V^*\), we can express the above as
    \begin{equation}
        \alpha_{abcd}\mapsto (\bigwedge{}^2 g)_{ab}{}^{ij} \alpha_{ijkl} (\bigwedge{}^2 g)_{cd}{}^{kl} .
    \end{equation}
    Now using \(\rho^*\) as previously defined,
    \begin{equation}
        \rho^*(\alpha) \mapsto \rho^*(\bigwedge{}^2 g) \rho^*(\alpha) \rho^*(\bigwedge{}^2 g)^T
    \end{equation}
    Taking the determinant,
    \begin{equation}
        \det(\rho^*(\alpha)) \mapsto \det(\rho^*(\bigwedge{}^2 g))^2 \det(\rho^*(\alpha))
    \end{equation}
    Using the Sylvester-Franke theorem \autocite{Tornheim1952,Flanders1953},
    \begin{equation}
        \det(\rho^*(\alpha)) \mapsto \det(g)^{14}\det(\rho^*(\alpha)).
    \end{equation}
    Thus \(\det(\rho^*(\alpha))^{1/14}\) is a scalar density of weight \(+1\).
\end{proof}
Note that both of these scalar densities are homogeneous of order 2 in \(\alpha\) but they are \textit{a priori} independent. There are many easy examples of forms with non-vanishing self-wedge, but vanishing determinant. The simplest example is, probably, \eqref{eq:SLSLform}.

In \autoref{def:GAreaMet}, the natural choice is \(\det(\rho^*(\alpha))\), since this must be non-zero for the non-degeneracy of the area metric. However \autoref{thm:GSplit} suggests that \(\qh(\alpha\wedge\alpha)\) is more natural, as it removes the coefficient in front of \(\alpha\).

\end{document}